\newtheorem{theorem}{Theorem}[section]
\newtheorem{proposition}{Proposition}[section]
\newtheorem{lemma}{Lemma}[section]
\begin{document}

\title{Cops and robbers on planar directed graphs}

\author{ Po-Shen Loh \thanks{Department of Mathematical Sciences, Carnegie
Mellon University, Pittsburgh, PA 15213. E-mail: ploh@cmu.edu. Research
supported in part by an NSA Young Investigators Grant, NSF grants
DMS-1201380 and DMS-1455125, and by a USA-Israel BSF Grant.}
\and
Siyoung Oh \thanks{Daum Kakao Corp., South Korea.  Email: unbing@gmail.com.  Research conducted for Masters Thesis at Carnegie
Mellon University.}
}

\date{}

\maketitle

\begin{abstract}
  Aigner and Fromme initiated the systematic study of the \emph{cop
  number}\/ of a graph by proving the elegant and sharp result that in
  every connected planar graph, three cops are sufficient to win a natural
  pursuit game against a single robber.  This game, introduced by
  Nowakowski and Winkler, is commonly known as \emph{Cops and Robbers}\/ in
  the combinatorial literature.  We extend this study to directed planar
  graphs, and establish separation from the undirected setting.  We exhibit
  a geometric construction which shows that a more sophisticated robber
  strategy can indefinitely evade three cops on a particular strongly
  connected planar directed graph.
\end{abstract}

\section{Introduction}

The general study of pursuit games on graphs drew a substantial amount of
research attention over the last decade.  Its appeal stemmed from the
combination of its apparent proximity to natural applications, some
combinatorially elegant results and conjectures, and the challenge of
developing tools to analyze game-theoretic dynamics on graphs.  Indeed,
dynamic processes are typically already significantly more difficult to
analyze than properties of static graphs, and game theoretic interactions
between opposing parties drive the complexity to another level.

This paper considers the most extensively studied game in this area,
commonly known as \emph{Cops and Robbers}, introduced by Nowakowski and
Winkler \cite{NW}, and independently by Quillot \cite{Quillot}.  In its
classical setting, a graph is fixed, and fully known to two players,
\emph{the cops}\/ and \emph{the robber}.  The cops move first, placing $k$
cops on the vertices of the given graph, at any locations of choice
(multiple cops are allowed to reside on the same vertex).  The robber then
chooses a single vertex at which to start.  Players alternate turns,
starting with the cops, and on each turn, they choose a subset of their
agents to move across one edge each.  Note that the robber has only one
agent, and so decides whether or not to move to an adjacent vertex.  If the
robber ends up on the same vertex as a cop, then the cops win.

The main question is to determine, for each graph, the minimum value of $k$
(known as the \emph{cop number}\/ of the graph) for which there is a
strategy for the cops that guarantees a win within finite time.  This
game-theoretic graph invariant was introduced by Aigner and Fromme
\cite{AF} shortly after the game's appearance in the combinatorial
literature, and in that same paper, the authors proved the elegant and
sharp result that every planar graph has cop number at most three.

This basic setting is a natural prototype for a general class of pursuit
games on graphs, and it has been the subject of numerous papers, including
multiple surveys \cite{Alspach, BB, FT, Hahn} and ultimately a book by
Bonato and Nowakowski \cite{BN}.  Many variants have been studied,
including random graphs \cite{BKL, LuczakP, PW}, Cayley graphs
\cite{Frankl2}, geometric graphs \cite{BDFM}, directed graphs \cite{FKL},
and fast robbers \cite{AM}, to name just a few.

The central open conjecture in this area, due to Meyniel (communicated by
Frankl \cite{Frankl1}), is that every $n$-vertex graph has cop number at
most $O(\sqrt{n})$, which would be asymptotically tight.  The current best
bounds of $O(n / e^{\Theta(\sqrt{\log n})})$ were proven by Lu and Peng
\cite{LP}, with alternate proofs independently discovered by Frieze,
Krivelevich, and Loh \cite{FKL}, and Scott and Sudakov \cite{SS}.

The same paper of Frieze, Loh, and Krivelevich also formally started the
systematic study of the game in directed graphs (where the cops and robber
can only move along the direction of each edge), mainly in the context of
Meyniel's conjecture.  Specifically, the focus was on $n$-vertex strongly
connected digraphs, because the problem for a general digraph easily
reduces to the problems on its strongly connected components.  As usual,
digraphs turn out to be more complicated than undirected graphs, and they
obtained a weaker upper bound of $O(n \cdot \frac{(\log \log n)^2}{\log
n})$, which is still the current best bound for the cop number in directed
graphs.  The lower bound from undirected graphs carries over to the
directed case (simply replace each edge with a pair of antiparallel
directed edges), but there was no improvement.

This paper combines the directed graph inquiry with the original focus of
Aigner and Fromme on planar graphs.  Specifically, we ask to determine, for
each $n$, the maximum cop number of any $n$-vertex planar digraph.
Unfortunately, the approach of Aigner and Fromme for their upper bound (of
three cops) completely breaks down, as it relied on repeated clever
applications of the following simple and elegant observation.

\begin{lemma}
  In an undirected graph, if $P$ is a geodesic (a shortest path between a
  pair of vertices), then a single cop can guard all of the vertices of
  $P$: after a bounded number of turns, if the robber ever moves onto a
  vertex of $P$, it will be captured by that cop.
  \label{lem:AF-upper}
\end{lemma}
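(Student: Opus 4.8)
The plan is to have the single cop \emph{shadow} the robber's projection onto $P$, exploiting the defining property of a geodesic that $d(x_i,x_j)=|i-j|$ for all path vertices. Writing $P=(x_0,x_1,\dots,x_m)$ and letting $d(\cdot,\cdot)$ be graph distance, I would assign to every vertex $w$ the path index
\[
  h(w)=\left\lfloor \tfrac12\bigl(d(w,x_0)-d(w,x_m)+m\bigr)\right\rfloor .
\]
Two facts make this the right quantity. First, it is \emph{consistent}: since $P$ is shortest, $d(x_i,x_0)=i$ and $d(x_i,x_m)=m-i$, so $h(x_i)=i$. Second, it is $1$-Lipschitz: crossing a single edge changes each of $d(\cdot,x_0)$ and $d(\cdot,x_m)$ by at most $1$, hence changes the half-sum by at most $1$, and the floor by at most $1$. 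The triangle inequality $|d(w,x_0)-d(w,x_m)|\le d(x_0,x_m)=m$ also keeps $h(w)\in\{0,\dots,m\}$, so $x_{h(w)}$ is always a legal target on $P$.

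The cop's strategy then has two phases. In the \emph{catch-up} phase the cop, currently at $x_c$, repeatedly steps one vertex along $P$ toward $x_{h(r)}$, where $r$ is the robber's position. I would track the potential $|c-h(r)|$: the cop changes $c$ by exactly $1$ in the correct direction, while a robber move changes $h(r)$ by at most $1$, so this potential never increases, and it strictly decreases on every round in which the robber does not push $h(r)$ further away. Because $h(r)$ is confined to the finite interval $\{0,\dots,m\}$, the robber cannot push it away forever, so the potential reaches $0$ after a number of rounds bounded in terms of $m$. From then on the cop is in the \emph{maintenance} phase, re-establishing $c=h(r)$ after each robber move; this is always possible since $h$ moves by at most one step per round and so does the cop.

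Finally I would argue capture. Suppose that at the start of a round the cop sits at $x_{h(r)}$ and the robber then steps onto $P$, landing at $x_i$. Since the robber's old and new vertices are adjacent, the Lipschitz bound gives $|h(r)-i|\le 1$, so the cop and the robber now occupy vertices of $P$ at path-distance at most one, i.e.\ they are equal or adjacent in the graph; the cop therefore captures on (or before) its next move. This is the sense in which $P$ is guarded. I expect the only genuinely delicate point to be the catch-up analysis: one must rule out the robber's natural attempt to ``run the projection away'' from the cop indefinitely, and the crux is precisely that the geodesic structure pins the projection inside a bounded interval, turning an a priori open pursuit into a finite one. The Lipschitz and consistency properties of $h$ are then routine consequences of the triangle inequality together with the exactness of distances along a shortest path.
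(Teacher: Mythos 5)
Your proof is correct and is essentially the paper's own approach in full detail: the paper only sketches this lemma (citing Aigner--Fromme), describing the strategy of moving toward the robber's position as tracked on $P$ and the invariant that the cop can eventually reach every vertex of $P$ at least as quickly as the robber, and your shadow map $h$ with the maintained equality $c=h(r)$ is exactly the standard formalization of that sketch (indeed, $|h(r)-j|\le d(r,x_j)$ for every $j$, so your invariant implies the paper's). The only detail worth adding is the trivial one that the cop must first walk onto $P$ before your catch-up analysis applies, which costs only a bounded number of additional turns.
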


Geodesics are particularly useful in planar graphs because they can provide
powerful separation properties in the plane, thereby efficiently trapping
the robber in successively smaller regions.  The proof of Lemma
\ref{lem:AF-upper} employs the cop strategy of always moving towards the
vertex of $P$ which is nearest to the robber.  The geodesic's minimality
guarantees that the cop will eventually be able to reach every vertex of
$P$ at least as quickly as the robber.  However, this strategy is clearly
impossible in directed graphs, and indeed, there is no upper bound written
in the literature.  For completeness, we observe that the Lipton-Tarjan
Planar Separator Theorem gives a nontrivial upper bound, but it is still
far from constant.  The proof will appear in Section \ref{sec:upper}.

\begin{proposition}
  \label{prop:upper}
  Every $n$-vertex strongly connected planar digraph has cop number at most
  $O(\sqrt{n})$.
\end{proposition}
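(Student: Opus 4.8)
The plan is to use the Lipton--Tarjan Planar Separator Theorem recursively, confining the robber to an ever-shrinking region by stationing cops on a nested sequence of separators. Recall that the separator theorem, applied to the underlying undirected graph of any $m$-vertex planar digraph, produces a set $S$ of $O(\sqrt{m})$ vertices whose deletion leaves two parts, each of size at most $\tfrac{2}{3}m$, with no edge between them in the underlying undirected graph. Because every directed path projects to an undirected walk, a robber confined to the vertices of one part can never reach the other part without passing through $S$. The key difference from the undirected setting is that, since the guarding Lemma \ref{lem:AF-upper} fails for digraphs, we cannot cover a separator with a bounded number of mobile cops; instead we simply place one stationary cop on each vertex of $S$, which costs $O(\sqrt{m})$ cops but permanently forbids the robber from ever occupying or crossing $S$.

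Concretely, I would maintain the invariant that after stage $i$ the robber is trapped inside an induced subregion $R_i$ with $|R_i| \le (2/3)^i n$, and that every vertex of the accumulated boundary separating $R_i$ from the rest of the graph carries a stationary cop. To advance from stage $i$ to stage $i+1$, compute a separator $S_{i+1}$ of size $O(\sqrt{|R_i|})$ for the planar subgraph induced on $R_i$, and route a fresh batch of $|S_{i+1}|$ cops from a reserve pool onto the vertices of $S_{i+1}$. Here strong connectivity is essential: it guarantees that each target vertex of $S_{i+1}$ is reachable by a directed path, so each reserve cop can walk to its assigned vertex and then wait there. Crucially, the duration of this routing is irrelevant, because throughout it the robber remains imprisoned in $R_i$ by the stationary boundary cops placed in earlier stages. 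At the first moment when all of $S_{i+1}$ is simultaneously occupied, the robber sits strictly on one side of $S_{i+1}$ within $R_i$; we take that side (of size at most $\tfrac{2}{3}|R_i|$) as $R_{i+1}$, and these new cops join the stationary boundary. Once $|R_i|$ drops to a constant we finish by placing cops directly on the remaining vertices.

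The reason this costs only $O(\sqrt{n})$ cops is that the stationary cops are never released, so the total number used is the sum of the separator sizes along the single root-to-leaf branch that the robber's trajectory traces out, namely
\[
\sum_{i \ge 0} O\!\left(\sqrt{|R_i|}\right) \;\le\; \sum_{i \ge 0} O\!\left(\sqrt{(2/3)^i\, n}\right) \;=\; O(\sqrt{n}) \cdot \sum_{i \ge 0} \left(\sqrt{2/3}\right)^i \;=\; O(\sqrt{n}),
\]
where the geometric series converges because $\sqrt{2/3} < 1$. Thus initializing the game with $C\sqrt{n}$ cops for a suitable absolute constant $C$ leaves enough reserves to service every stage.

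The main obstacle to make rigorous is the adaptivity and the simultaneity of the separator occupation. Because the separator $S_{i+1}$ can only be chosen after the robber has committed to a side at stage $i$, the cop placements cannot be fixed in advance, and one must argue carefully that reserve cops can be shepherded onto all of $S_{i+1}$ at a common time without disturbing the already-placed boundary cops. Once one checks that the stationary boundary cops maintain their confinement regardless of how long the routing takes, that strong connectivity supplies the needed directed paths, and that subgraphs of planar graphs remain planar so that the separator theorem applies at every stage, the recursion goes through and yields the claimed $O(\sqrt{n})$ bound.
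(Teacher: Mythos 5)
Your proposal is correct and follows essentially the same argument as the paper: recursively apply the Lipton--Tarjan separator theorem to the underlying undirected planar graph, use strong connectivity to route reserve cops onto each separator vertex where they remain as stationary guards, and bound the total cost by the convergent geometric series $\sum_i O\bigl(\sqrt{(2/3)^i n}\bigr) = O(\sqrt{n})$. The details you flag as needing care (confinement during routing, planarity of induced subgraphs, adaptivity of the separator choice) are handled in the paper exactly as you suggest.
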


The main open question for planar digraphs is whether the cop number is
always bounded by a constant in each strongly connected component.  It is
then interesting to examine lower bound constructions.  Again, basic
methods do not work, as many previous results (including the tightness of
Meyniel's conjecture) relied on another simple observation.

\begin{lemma}
  Every undirected graph with minimum degree $\delta$ and no 3- or 4-cycles
  has cop number at least $\delta$.
  \label{lem:AF-lower}
\end{lemma}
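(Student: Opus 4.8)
The plan is to show that $\delta-1$ cops cannot win, which gives cop number at least $\delta$. I will exhibit an explicit evasion strategy for the robber and prove it succeeds forever. Throughout, write $N[v]$ for the closed neighborhood of a vertex $v$, let $k=\delta-1$ be the number of cops, and let $C$ denote their (changing) set of positions. The robber will maintain the following invariant: \emph{immediately after each of the robber's moves, no cop lies in $N[w]$, where $w$ is the robber's current vertex.} This invariant is exactly what guarantees survival: if no cop is in $N[w]$ when the robber finishes moving, then on the cops' next turn no cop can reach $w$ (a cop reaches $w$ only from $N[w]$), so the robber is not captured and again occupies a cop-free vertex at the start of its turn.

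The heart of the argument is a local counting claim: \emph{if no cop occupies the robber's vertex $v$, then each individual cop can move onto at most one neighbor of $v$.} I will prove this by examining a single cop at a vertex $c\neq v$ and the set of neighbors $w\in N(v)$ it threatens, i.e.\ those with $c\in N[w]$. If $c$ is itself a neighbor of $v$, then $c$ threatens the neighbor $w=c$; any second threatened neighbor $w'$ would be adjacent to both $v$ and $c$, producing the triangle $v\,c\,w'$, which is forbidden. If $c$ is not a neighbor of $v$, then every threatened neighbor lies in $N(c)\cap N(v)$, and two distinct such neighbors $w_1,w_2$ would yield the $4$-cycle $v\,w_1\,c\,w_2$, again forbidden. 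Hence each cop threatens at most one neighbor, so the $k=\delta-1$ cops together threaten at most $\delta-1$ of the at least $\delta$ neighbors of $v$; the robber moves to an unthreatened neighbor $w$, which by definition satisfies $C\cap N[w]=\emptyset$, reinstating the invariant.

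It remains to secure a safe starting vertex, which is the one subtle point, since the cops move before the robber can react: the robber must begin at a vertex $v$ with $N[v]\cap C=\emptyset$, that is, a vertex not dominated by the initial cop placement. I will deduce the existence of such a vertex from the bound $\gamma(G)\geq\delta$ on the domination number, which follows from the very same girth reasoning: fixing any vertex $v$ outside a dominating set $D$, each neighbor of $v$ must have a dominator in $D$, and the no-triangle/no-$4$-cycle hypothesis forces these dominators to be distinct, so $|D|\geq\deg(v)\geq\delta$. Since $|C|=\delta-1<\gamma(G)$, the cops' placement fails to dominate, and a safe start exists.

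I expect the main obstacle to be the local counting claim and its careful interface with the move order: one must verify that the triangle and $4$-cycle cases exhaust all ways a single cop could threaten two escape routes, and confirm that the chosen invariant is simultaneously strong enough to guarantee survival through the cops' move and weak enough to be re-established by a single robber step. The domination bound needed for the initial placement is then a pleasant bonus that reuses the same disjointness phenomenon.
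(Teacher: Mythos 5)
Your proof is correct and follows essentially the same approach as the paper's brief argument: the girth condition (no 3- or 4-cycles) forces each cop to threaten at most one neighbor of the robber's vertex, so $\delta-1$ cops always leave an escape route. Your write-up simply fills in the details the paper leaves implicit---the triangle/$4$-cycle case analysis, the explicit invariant, and the domination-number bound $\gamma(G)\geq\delta$ securing a safe starting vertex---with the only cosmetic difference being that your robber moves every turn rather than waiting until a cop becomes adjacent.
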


This follows from the elementary robber strategy of remaining stationary
until a cop moves to an adjacent vertex, and then moving to another
adjacent vertex which has no cops adjacent to it.  This lemma obviously
extends to directed graphs with a similar strategy: every digraph with
minimum out-degree $\delta^+$ and an appropriate girth condition (e.g.,
undirected girth at least five) has cop number at least $1 + \delta^+$.  It
suffices to focus on constructions in which each unordered pair of vertices
induces at most one directed edge, because if antiparallel edge pairs
exist, the entire digraph can be replaced by one with no antiparallel pairs
by subdividing every edge with a new unique vertex, and replacing each
antiparallel edge pair by a pair of independent 2-edge directed paths.
Since every $n$-vertex planar graph has fewer than $3n$ edges, any such
construction will always have $\delta^+ \leq 2$, and so the standard robber
strategy cannot even be used to improve the lower bound by any amount at
all.

The main contribution of this paper is a geometrically-inspired
construction which introduces and employs a more sophisticated strategy for
the robber, and breaks through the lower bound for undirected graphs.

\begin{theorem}
  There is a strongly connected planar digraph which requires more than
  three cops to capture the robber.
  \label{thm:main}
\end{theorem}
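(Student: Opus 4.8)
The plan is to exhibit an explicit strongly connected planar digraph $G$ together with a robber strategy that provably evades any three cops, so that the cop number of $G$ is at least four. Since the degree-based lower bound (Lemma \ref{lem:AF-lower}) is useless here, the entire content lies in a bespoke geometric construction and a matching invariant-based robber strategy. I would build $G$ as a \emph{directed annular grid}: place $m$ concentric cycles $C_0, \dots, C_{m-1}$, each of length $n$, all consistently oriented (say counterclockwise), and join consecutive cycles by radial edges whose orientations alternate (some pointing inward, some outward) so that the digraph is strongly connected while keeping every out-degree at most two. I would first discharge the routine requirements: planarity is immediate from the annular drawing, strong connectivity follows because one can travel around any cycle and hop between adjacent cycles in both radial directions, and the out-degree bound guarantees (as noted in the excerpt) that the naive robber strategy yields nothing, so the construction is not secretly cheating.

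The heart of the argument is the robber strategy. The governing intuition is that directedness makes each cycle a \emph{one-way street}: a cop situated counterclockwise-behind the robber on the same cycle can never close the angular gap if the robber also advances counterclockwise, while a cop ahead only recedes; thus the only genuine threats are head-on collisions engineered via radial shortcuts from neighboring cycles. I would encode the robber's safety as an invariant --- informally, a lower bound on the ``angular escape room'' the robber retains, measured by the minimum number of counterclockwise steps separating the robber from the nearest point where a cop could intercept it. The robber's move each turn is then to advance counterclockwise along its current cycle, deviating radially only when a pincer is imminent, always choosing the move that restores the invariant.

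The main obstacle, where essentially all the work lives, is proving that three cops can never destroy this invariant: after any legal move by the three cops, the robber must have an available response that preserves its escape room. I would argue this by a potential/counting argument exploiting the one-way structure --- each cop, because it too can only circulate counterclockwise and can only change cycles along the sparse radial edges, can ``cover'' only a bounded angular sector per turn and cannot reverse to re-cover a sector it has passed. The crux is to show that three such sectors cannot simultaneously seal off all of the robber's counterclockwise and radial exits while also advancing fast enough to tighten the trap; by taking $n$ and $m$ large enough relative to the number of cops, a gap always remains. I expect the delicate part to be the boundary behavior at the innermost and outermost cycles and the exact accounting near radial edges, precisely where a careless robber could be cornered; handling these cases --- possibly by a symmetric or periodic choice of radial orientations that denies the cops any exploitable asymmetry --- is the technical core of the proof.
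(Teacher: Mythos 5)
There is a genuine gap, and it lies in the central intuition on which your whole invariant rests. You claim that on a consistently oriented cycle ``a cop behind can never close the angular gap\ldots while a cop ahead only recedes.'' The second half is backwards: a cop ahead that simply \emph{stands still}\/ never recedes, and since your graph contains no clockwise edge anywhere, the robber can never increase its angular distance to a stationary cop. This is exactly why a single directed cycle has cop number two: one cop stands still as a wall, a second pushes from behind, and the robber, unable to reverse or to stall forever, is squeezed into the wall. Your annular grid inherits this defect globally. A pusher cop forces the robber's angular coordinate to increase without bound (the robber cannot stall forever, and its radial freedom at any fixed angular column is a finite budget of at most $m-1$ one-way moves, since radial edges in a column all point the same way). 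The remaining two cops can then set up a wall: with column $a$ oriented (say) outward and column $a-1$ oriented inward, a configuration with one cop at $(r,a)$, one at $(r-1,a-1)$, and the robber at $(r,a-1)$ leaves the robber with \emph{no legal move at all}\/ --- crossing and descending would walk onto a cop, while ascending and reversing edges simply do not exist --- so the pusher walks onto it. The ``delicate boundary behavior'' you defer is precisely where this collapse is unavoidable: the innermost and outermost cycles terminate the radial escape, so your proposed invariant of ``angular escape room'' is not merely hard to maintain there, it is monotonically destroyed and cannot be restored by any robber move. The burden of the theorem --- that \emph{no}\/ three-cop strategy succeeds --- is thus not only unproven but almost certainly false for your graph.

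The paper's construction is engineered specifically to avoid this monotonicity, and the contrast shows what is missing. There, each decagon edge is replaced by a chain gadget that the robber can traverse in either direction, reversing at a cost of only one turn, and the spokes alternate orientation so the center of a unit can be both entered and left; the robber's evasion arguments (Lemmas \ref{lem:robber-from-center} and \ref{lem:robber-from-perim}) use this reversibility constantly. Long one-way stretches appear only \emph{between}\/ units, where they penalize the cops rather than the robber: a cop that commits to a length-1000 one-way path toward the robber's unit abandons all guarding duty for over a thousand turns, which is the key insight resolving the hardest case (one cop inside the robber's unit), while the robber enters such a path only after verifying it is safe. Finally, the icosahedral adjacency structure guarantees that a cop outside the robber's unit can threaten at most three of its five exits. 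Your proposal has no analogue of any of these three mechanisms --- cheap reversal for the robber, one-way commitment costs for the cops, and an expansion-type bound on how much one distant cop can block --- and without them the one-way structure works for the cops instead of against them.
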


The rest of this paper is organized as follows.  The construction is
described in the next section, and analyzed in the section thereafter.  We
close with a short proof of Proposition \ref{prop:upper} in the final
section.

\section{Construction}

We will create a geometric construction which is clearly embeddable on the
surface of a sphere, at which point a standard stereographic projection
produces a planar oriented graph.  Although it is impossible to construct a
planar oriented graph with all out-degrees at least 3, it is natural to
start with an object which is as close as possible.  Indeed, consider an
icosahedron (Figure \ref{fig:icosahedron}), which as an undirected graph is
a triangulation with all degrees equal to 5.  We use this as the starting
point for a series of steps which ultimately produce our construction.

\begin{figure}[htbp]\centering
\begin{minipage}[c]{.3\textwidth}\centering
  \includegraphics[width=.95\textwidth]{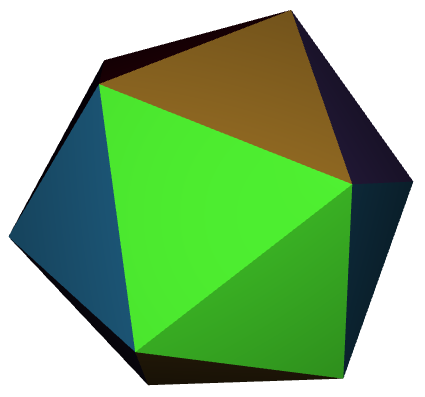}
  \caption{Icosahedron}\label{fig:icosahedron}
\end{minipage}\hspace{.1\textwidth}
\begin{minipage}[c]{.3\textwidth}\centering
  \includegraphics[width=.95\textwidth]{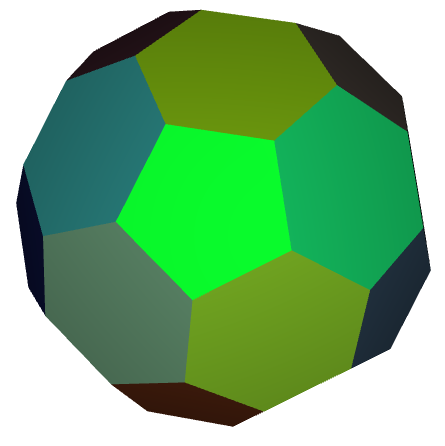}
  \caption{Truncated icosahedron}\label{fig:trunc-icosahedron}
\end{minipage}
\end{figure}

Truncate each of its vertices to obtain the truncated icosahedron in Figure
\ref{fig:trunc-icosahedron}.  Observe that each vertex is replaced with a
pentagon, and each of the triangular faces is replaced by a hexagon.  Next,
truncate again, this time along all edges of the original icosahedron
(which are now precisely the edges between pentagons).  This operation
produces the Archimedean solid in Figure \ref{fig:rhombi}, which is known
as the \emph{great rhombicosidodecahedron}, or \emph{truncated
icosidodecahedron}.\footnote{All polyhedron images were generated by
  \url{http://gratrix.net/polyhedra/webgl/poly.xhtml}.}
  
Observe that the original icosahedron vertices have been replaced by
decagons, the original icosahedron edges have been replaced by
quadrilaterals, and the original icosahedron faces have been replaced by
hexagons.  Also observe that each quadrilateral links two decagons, and
naturally identifies a pair of parallel edges between decagons, as
highlighted in Figure \ref{fig:rhombi-highlighted}.

\begin{figure}[htbp]\centering
\begin{minipage}[c]{.3\textwidth}\centering
  \includegraphics[width=.95\textwidth]{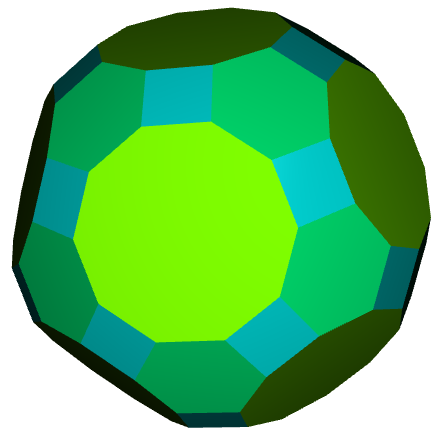}
  \caption{Great rhombicosidodecahedron}\label{fig:rhombi}
\end{minipage}\hspace{.1\textwidth}
\begin{minipage}[c]{.3\textwidth}\centering
  \includegraphics[width=.95\textwidth]{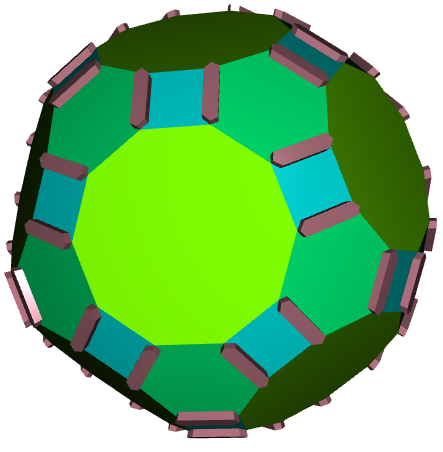}
  \caption{Highlighting parallel edges}\label{fig:rhombi-highlighted}
\end{minipage}
\end{figure}

We use this fundamental structure as the backbone for our construction.
Introduce a new vertex at the center of each decagon, called a
\emph{center}, and join it to each vertex of its decagon with a
\emph{spoke}, as in Figure \ref{fig:unit-undir}.  We will use the term
\emph{unit}\/ to refer to the whole structure of a single decagon,
including its center and spokes.  Two units are \emph{neighbors}\/ if they
are joined by one of the highlighted edges in Figure
\ref{fig:rhombi-highlighted}.

\begin{figure}[htbp]\centering
  \includegraphics[width=.6\textwidth]{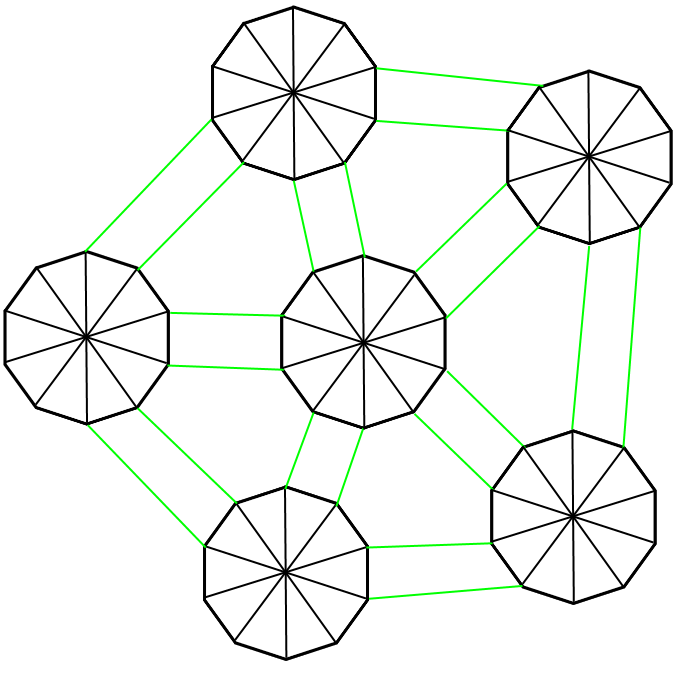}
  \caption{Point of view from any unit (without directions on edges). Note
  that edges connecting units are green.}
  \label{fig:unit-undir}
\end{figure}

The last step is to give directions and lengths to all edges.  Observe that
between each pair of neighboring units, we can always find a hexagon as
highlighted in Figure \ref{fig:unit-undir-hexagon}.  In each such hexagon,
orient all edges counter-clockwise.  This orientation is consistent because
the sphere is an orientable surface.  Ultimately, we obtain the structure
of Figure \ref{fig:unit-dir}.

\begin{figure}[htbp]\centering
  \includegraphics[width=.6\textwidth]{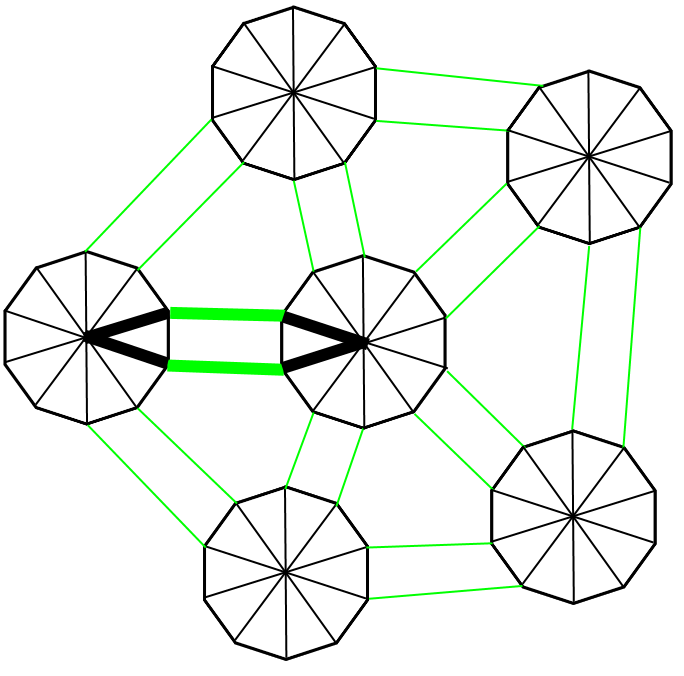}
  \caption{Point of view from any unit, highlighting one hexagon.}
  \label{fig:unit-undir-hexagon}
\end{figure}

\begin{figure}[htbp]\centering
  \includegraphics[width=.6\textwidth]{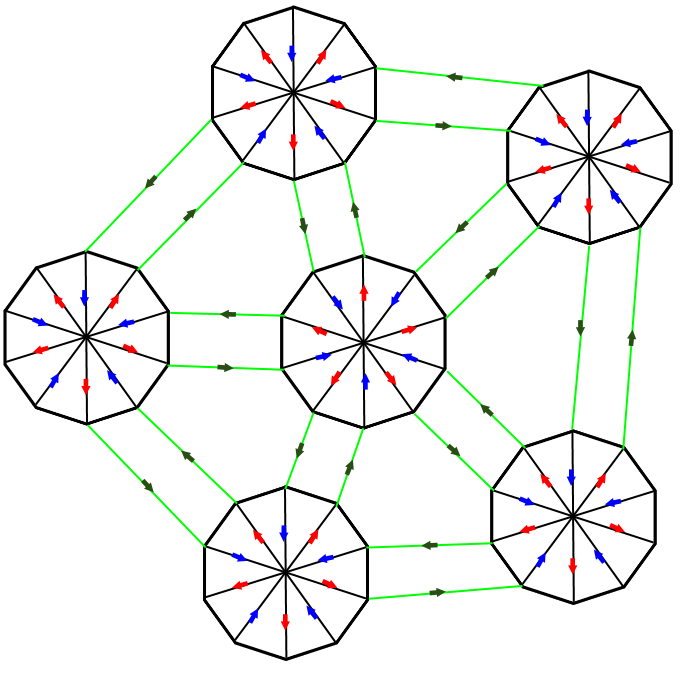}
  \caption{Point of view from any unit, with directions determined.}
  \label{fig:unit-dir}
\end{figure}

All green edges between units are now consistently oriented.  Subdivide
each of them with 999 new vertices so that it takes 1,000 turns to move
from one end of a green edge to the other end.  Likewise, subdivide each
spoke between a center and its decagon with 9 new vertices so that each
spoke now has length 10.  The only edges remaining to be oriented are the
decagon edges.  Replace each of them with a directed chain-like structure
as in Figure \ref{fig:16chain}.  It now takes 16 turns to move from one
original decagon vertex to another, and at most one additional turn to
reverse direction when traversing the chain.

\begin{figure}[htbp]\centering
  \includegraphics[width=.6\textwidth]{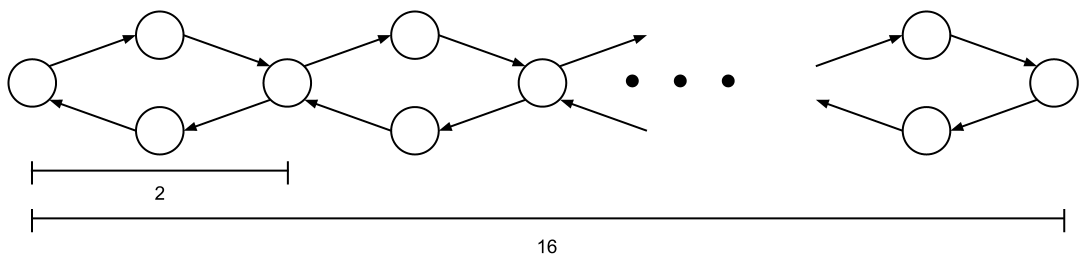}
  \caption{Replacement for one original decagon edge.}\label{fig:16chain}
\end{figure}

\section{Analysis}

In this section, we prove that our construction requires more than three
cops to capture the robber.  We achieve this by analyzing the robber's
travel from unit to unit.  Note that each unit has five neighboring units,
and in order to travel from a unit $U_1$ to a neighboring unit $U_2$, there
is exactly one \emph{exit}\/ vertex on the perimeter of $U_1$ from which
the robber can directly travel along a directed path of length 1000 to
reach $U_2$.  Each unit therefore has five exits.  It is convenient to make
the following observation.

\begin{lemma}
  Suppose that the robber is at the center of a unit $U$, and it is the
  robber's turn.  Let $c$ be the number of cops in unit $U$ which are not
  on spokes that are oriented towards the center.  Then, for any set $S$ of
  more than $c$ exits of $U$, there is at least one vertex in $S$ that the
  robber can reach in 10 turns, without being captured by any of the cops
  currently in $U$.
  \label{lem:robber-from-center}
\end{lemma}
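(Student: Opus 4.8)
The plan is to combine a pigeonhole argument over the five exits with a careful analysis of how quickly each cop can interfere with a robber who simply runs straight out of the center. First I would record the two structural facts on which everything rests: (i) each exit is the outer endpoint of a spoke oriented \emph{away} from the center, so that a robber at the center can travel directly to any exit along a directed path of length exactly $10$; and (ii) any two distinct decagon vertices are more than $10$ apart along the perimeter. Both are essentially forced by the construction, since a spoke has length $10$ while a decagon edge is a chain requiring at least $16$ turns to traverse: an exit not reachable by its own outward spoke could never be reached in $10$ turns at all, and no perimeter route between distinct decagon vertices fits inside $10$ turns. I would verify the orientation claim (i) directly from Figure~\ref{fig:unit-dir}.

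Next I would explain why a cop sitting on a spoke oriented \emph{towards} the center can never capture a robber heading straight for an exit, which is precisely why such cops are excluded from the count $c$. Such a cop can only move inward to the center and then back out along some outward spoke; but since the robber moves first and every spoke is a directed path of the same length, the cop reaches the center only after the robber has already vacated it, and thereafter it trails the robber by at least one step on whichever outward spoke the robber chose. Hence it can neither overtake the robber on the robber's spoke nor reach the target exit in time (the only alternative approach, around the perimeter, costs at least $16$ turns per decagon edge).

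The heart of the argument is then to show that each of the remaining cops --- exactly the $c$ cops not on inward spokes --- can \emph{block} at most one exit, where I say a cop \emph{blocks} an exit if it can guarantee capture of a robber running straight out to that exit. The key observation is that the only vertices from which a cop can intercept such a robber are the vertices of the outward spoke leading to that exit: each internal spoke vertex has in-degree one (its unique in-neighbor being the previous spoke vertex), so it can be occupied only by a cop that entered from the center, while the exit endpoint itself is reachable within $10$ turns only along its own spoke, every perimeter route again costing at least $16$ turns. Consequently a cop can threaten only the single exit on whose spoke it currently lies, and a cop lying on no exit spoke threatens no exit at all; thus the $c$ cops block at most $c$ exits.

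Finally, since $|S| > c$, at least one exit $e^\ast \in S$ is unblocked, and the robber wins by walking straight out its outward spoke to $e^\ast$, reaching it in $10$ turns; by the two preceding paragraphs no cop in $U$ can be positioned on the spoke to $e^\ast$ in time to capture it. The main obstacle I anticipate is the bookkeeping in the blocking step: one must confirm that a cop already partway out an outward spoke blocks exactly its own exit and cannot swing over to a neighboring exit within $10$ turns, account for the ``one extra turn to reverse'' allowance on the chains, and pin down the move order so that ``reaches in $10$ turns'' is consistent with the robber moving first.
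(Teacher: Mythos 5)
Your overall architecture matches the paper's proof: the robber runs straight out an outward spoke, cops on inward spokes are harmless, each remaining cop threatens at most one exit, and pigeonhole on $|S| > c$ finishes. However, there is a genuine error in your blocking step. You claim that ``the exit endpoint itself is reachable within 10 turns only along its own spoke, every perimeter route again costing at least 16 turns,'' and conclude that ``a cop lying on no exit spoke threatens no exit at all.'' This is false. The 16-turn figure bounds the distance between \emph{decagon vertices}, but a cop need not be standing at a decagon vertex: it can be stationed partway along a perimeter chain, say 5 steps from an exit, and then it reaches that exit in 5 moves (or simply waits on it), capturing the robber who arrives there on move 10. Your final safety check --- that no cop can be ``positioned on the spoke to $e^\ast$ in time'' --- only rules out spoke-based threats, so a robber following your strategy could be told that such an exit is safe and walk straight into a capture.

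The gap is fixable, and the fix is precisely what the paper's (very terse) proof invokes: rather than claiming perimeter cops threaten nothing, show that \emph{every} cop not on an inward spoke is within 10 moves of at most one vertex of $S$. For perimeter cops this follows from spacing: exits alternate with entrances around the decagon, so two exits are separated by at least two chains, i.e.\ at least 32 perimeter moves; hence a cop within 10 moves of one exit is at least 22 moves from any other, and a cop on an outward spoke can reach only the exit at its tip (the green edge leaves the unit, and any other exit is again too far). With this corrected counting, each of the $c$ cops blocks at most one exit of $S$, the pigeonhole goes through, and your (correct) trailing argument for inward-spoke cops and for cops trying to re-enter via the center completes the proof. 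In short: right skeleton, but your in-degree/spoke-only characterization of threats misses mid-chain cops, and the argument must be repaired by the distance-spacing bound before the lemma is actually proved.
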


\begin{proof}
  The robber will move directly toward one of the vertices of $S$.  It is
  clear that with this strategy, the robber cannot be stopped by any cop
  who is currently on a spoke oriented towards the center.  Furthermore,
  since each decagon edge was replaced by a chain which takes 16 turns to
  traverse from end to end, it is clear that every other cop in $U$ can
  only be within 10 moves of at most one vertex of $S$.  Since $|S| > c$,
  there will be a choice for the robber which avoids all of the cops.
\end{proof}

We will ultimately break into cases based upon how many cops are in the
robber's current unit.  It turns out that the most delicate case is when
there is exactly one cop in the robber's unit, and the following
observation will cleanly handle that situation, in conjunction with the
previous result.

\begin{lemma}
  Suppose that the robber is on the perimeter of a unit, and that unit has
  exactly one cop, located one vertex away from the center along a spoke
  which is oriented away from the center.  Then, the robber can reach the
  center within 27 moves, without that cop catching it.
  \label{lem:robber-from-perim}
\end{lemma}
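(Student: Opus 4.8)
The plan is to route the robber to the outer endpoint of some spoke oriented \emph{toward} the center and then have it walk straight inward along that spoke. The key structural fact is that the lone cop is stranded: sitting one vertex out along a spoke oriented \emph{away} from the center, it is forced to continue outward, and is far too poorly placed either to defend the center or to cut off the robber's route.

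First I would bound the robber's travel. By the spoke orientations fixed in the construction (Figure \ref{fig:unit-dir}), the robber's perimeter vertex either is itself the foot of an inward-oriented spoke or is adjacent to one. In the first case the robber simply walks inward, reaching the center in the $10$ moves it takes to cross a spoke. In the second case it first crosses one perimeter chain to reach the foot of an inward spoke, which costs at most $17$ moves ($16$ to traverse plus at most one to reverse, as recorded in Figure \ref{fig:16chain}), and then walks inward in $10$ more moves. Either way the robber reaches the center in at most $17 + 10 = 27$ moves, matching the claimed bound.

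It then remains to verify that the single cop cannot interfere, which is the crux of the argument. Because the cop sits one vertex out on a spoke oriented away from the center, its only legal moves continue outward, so it is confined to that spoke for its first $9$ moves; none of those interior spoke vertices lie on the robber's route, which uses a perimeter chain, a distinct inward spoke, and the center. Only upon reaching its own decagon vertex (at move $9$) can the cop threaten anything, and from there any path to the center must cross at least one perimeter chain to the foot of an inward spoke and then walk in, costing at least $16 + 10$ further moves; hence the cop cannot reach the center before move $9 + 16 + 10 = 35 > 27$. The same bookkeeping shows the cop reaches the foot of the robber's chosen inward spoke no earlier than move $9 + 16 = 25$, by which time the robber is already deep inside that spoke and moving toward the center ahead of it, so the cop can never overtake the robber there.

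The step I expect to demand the most care is excluding an interception on the perimeter chain itself, especially in the boundary sub-case where the cop's spoke is rooted at (or adjacent to) the robber's own starting vertex. Here I would exploit that the robber moves first and is already several steps into its chain before the cop even emerges onto the perimeter at move $9$, and that the perimeter chains are themselves directed, so the cop may be unable to enter the relevant chain in the robber's direction at all. In every case the nine-move head start, together with the comfortable gap between the robber's bound of $27$ and the cop's minimum of $35$, leaves no window for capture; minor off-by-one effects arising from the alternation of turns are absorbed by these margins.
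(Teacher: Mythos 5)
Your proposal is correct and follows essentially the same route as the paper's proof: send the robber along a shortest directed path to the center (at most $1$ move to reverse, $16$ to traverse a perimeter chain to the foot of an inward spoke, and $10$ to walk that spoke, for $27$ total), while the lone cop, forced to spend $9$ moves travelling outward along its own spoke, cannot interfere. The only difference is that you spell out the interception arithmetic (the cop's $9+16+10=35$ versus the robber's $27$, and the $25$-versus-$17$ race to the chosen spoke's foot) that the paper compresses into the phrase ``it is clear that the robber will reach the center without any interference.''
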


\begin{proof}
  The robber will follow a shortest directed path from its current location
  to the center.  Since the spokes alternate in orientation toward and away
  from the center, and it only takes at most one turn to change direction
  along a chain on the perimeter, it is clear that the robber will reach
  the center without any interference from the cop, who starts 9 moves away
  from the perimeter along an outward spoke.  The maximum number of moves
  required for the robber is 27, because it may take the robber 1 move to
  change direction along a perimeter chain, 16 moves to traverse the
  perimeter to the nearest inward spoke, and 10 more moves to traverse the
  inward spoke to the center.
\end{proof}

We are now ready to prove that three cops are insufficient to capture the
robber on our construction.

\begin{proof}[Proof of Theorem \ref{thm:main}.]
  At the start of the game, three cops select their positions.  Since the
  icosahedron has 12 vertices, the robber is free to select a position
  which is the center of a unit that starts with no cops.  The analysis now
  proceeds by considering how the robber moves from unit to neighboring
  unit.

  The robber remains at a center until a cop arrives at an adjacent vertex
  (along an inbound spoke in its unit).  It is then the robber's turn.  It
  suffices to show that the robber can always move from this state to a
  state in which it is again at a center (possibly of a different unit),
  with a cop adjacent along an inbound spoke, and it is the robber's turn.
  This will prove that the robber can escape capture indefinitely.

  So, let us focus on the situation in which the robber is at a center of
  some unit $U$, with an immediately adjacent inbound cop.  If there are
  any cops on length-1000 unit-to-unit directed paths towards $U$, without
  loss of generality, assume that they are already at the corresponding
  entry vertices along the perimeter of $U$.  (This only makes it more
  difficult for the robber to escape to a neighboring unit center, because
  cops that are trapped in transit along a unit-to-unit path have nowhere
  to go but forward to $U$.)  Note that there are five exit vertices from
  $U$, each to a distinct neighboring unit.  The robber's strategy will be
  to use one of them to exit to a neighboring unit which cannot be reached
  more quickly by any cop.
  
  We split into three cases, depending on how many cops are in unit $U$.
  If all three cops are in $U$, then Lemma \ref{lem:robber-from-center}
  implies that of the five exits from $U$, there is at least one option
  which can be reached with no interference from the cops.  The robber
  moves directly toward one of these, takes the length-1000 directed path
  to the neighboring unit, and then moves directly toward that unit's
  center, completing this case.

  Next, consider the case when there are exactly two cops in $U$.  The cop
  outside $U$ is in some unit $U'$, but even if one considers $U'$ together
  with its five neigboring units, these six total units overlap with at
  most three neighboring units of $U$.  (We have used the fact that in an
  icosahedron, among the five neighbors of a fixed vertex $u$, the overlap
  size with a different vertex $u'$ and its neighborhood is largest when
  $u'$ is a neighbor of $u$, at which point it has size three.)  The robber
  will seek an exit which does not go to $U'$ or a neighboring unit of
  $U'$.  Since there were five exits, there are still at least two left.
  Lemma \ref{lem:robber-from-center} then implies that there is at least
  one option which can be reached with no interference from the cops in
  $U$, and the robber safely proceeds through that exit to the center of
  that neighboring unit.

  The final case has exactly one cop in $U$.  The previous argument no
  longer works, because each of the two cops outside $U$ can in theory
  block up to three neighboring units of $U$ (as in the previous case's
  analysis), and could together block all five neighboring units of $U$.
  The robber counters with a different strategy.  It is this twist in this
  case which improves the lower bound from three to four, and here we
  leverage the length-1000 paths between neighboring units.  The key
  insight is that as long as no other cops start moving toward $U$, the
  robber can evade the single cop in $U$ indefinitely.  However, the moment
  a cop starts down a long one-way street towards $U$, it stops guarding
  three neighboring units, and in fact guards zero neighboring units.  We
  formalize this as follows.

  The robber begins by moving directly along an outbound spoke, and reaches
  a vertex on the perimeter of unit $U$ in 10 moves.  It then stays
  stationary until the cop in $U$ moves onto an adjacent vertex.  It then
  moves along the perimeter of $U$, in a direction away from the cop,
  moving only when the cop moves onto an adjacent vertex along an edge
  directed towards it.  (So, it is possible that the robber spends a
  substantial amount of time not moving at all, if the cop is moving
  through vertices which are not adjacent to the robber.)

  The robber continues this simple evasion strategy until one of the other
  two cops takes a step into a length-1000 unit-to-unit directed path
  leading to $U$.  At this point, the robber switches strategy again, to
  exit $U$ within 200 turns.  To achieve this, observe that the remaining
  cop who is neither in $U$ nor trapped in the length-1000 path to $U$ can
  reach at most three neighboring units of $U$ within 1500 turns.  Let $S$
  be the set of exits of $U$ which do not lead to those units.
  
  The robber selects a direction away from the cop in $U$, and consistently
  moves along the perimeter in that direction until he reaches one of the
  (at least two) exits in $S$.  If during this process, the cop never
  passes through the center, then the robber will definitely reach its exit
  without being captured by the cop in $U$.  Since each decagon edge was
  replaced by a 16-turn chain, this will take at most 160 turns.

  Otherwise, if the cop attempts to route through the center, the robber
  suddenly changes strategy again at the moment the cop moves onto a vertex
  adjacent to the center along an outbound spoke (which must happen on any
  route through the center).  At that moment, the robber employs the
  strategy in Lemma \ref{lem:robber-from-perim}, and reaches the center
  within 27 more moves without interference from that cop.  Then, the
  robber changes strategy again to that in Lemma
  \ref{lem:robber-from-center}, and since $|S| > 1$, it will definitely be
  able to reach an exit in $S$ in 10 more turns, without any interference
  from the cop in $U$.  Therefore, the robber will be able to reach an exit
  in $S$ within a total of 200 turns, during which the cop en route from a
  neighboring unit (along a length-1000 directed path) is still far off.
  The robber then traverses the length-1000 directed path out of this exit,
  and reaches the center of the corresponding neighboring unit within 1500
  turns, without any interference from any cops, as claimed.
\end{proof}

\section{Upper bound}
\label{sec:upper}

We close with a very short treatment of the upper bound, which applies the
Planar Separator Theorem of Lipton and Tarjan \cite{LT}.

\begin{theorem} 
  \label{lem:planar_LT}
  [Lipton and Tarjan CITE]
  There is an absolute constant $c$ for which the following holds.  Every
  $n$-vertex planar graph can be partitioned into three sets $A$, $B$, and
  $C$ such that $|A| \leq 2n/3$, $|B| \leq 2n/3$, $|C| \leq c \sqrt n$, and
  there is no edge between $A$ and $B$.
\end{theorem}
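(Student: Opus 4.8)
The plan is to reprove the classical separator theorem of Lipton and Tarjan by combining a breadth-first layering with a bounded-radius fundamental-cycle separator. I will sketch the argument and flag where the real work lies.

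\medskip
\noindent\textbf{Reductions and layering.} First I would reduce to the case that $G$ is a connected planar triangulation. Adding edges to $G$ only enlarges the family of edges a separator must avoid, so any valid $(A,B,C)$ for a triangulated supergraph is valid for $G$ itself; disconnectedness will be absorbed by the final grouping step. After fixing a plane embedding and a root $r$, let $L_0, L_1, \dots, L_t$ be the breadth-first levels, where $L_i$ is the set of vertices at distance $i$ from $r$. The crucial structural fact is that in a BFS every edge joins vertices on the same or consecutive levels, so each single level $L_i$ separates $\bigcup_{j<i} L_j$ from $\bigcup_{j>i} L_j$. Let $\ell$ be the \emph{median level}, that is, the smallest index with $|L_0|+\dots+|L_\ell| \ge n/2$; then the levels strictly below $\ell$ contain fewer than $n/2$ vertices and the levels strictly above $\ell$ contain at most $n/2$ vertices.

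\medskip
\noindent\textbf{Finding thin levels near the median.} By pigeonhole I would locate two thin levels bracketing the median. Among the $\lfloor\sqrt n\rfloor+1$ consecutive levels $L_{\ell-\lfloor\sqrt n\rfloor},\dots,L_\ell$, not all can have size exceeding $\sqrt n$ (else they would contain more than $n$ vertices), so some level $\ell_1 \in [\ell-\sqrt n,\ \ell]$ has $|L_{\ell_1}| \le \sqrt n$; symmetrically some $\ell_2 \in [\ell,\ \ell+\sqrt n]$ has $|L_{\ell_2}| \le \sqrt n$. These two levels join $C$, and they satisfy both $|L_{\ell_1}|, |L_{\ell_2}| \le \sqrt n$ and $\ell_2 - \ell_1 \le 2\sqrt n$. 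Deleting them peels off the outer-top region (levels $<\ell_1$, of size $<n/2$) and the outer-bottom region (levels $>\ell_2$, of size $\le n/2$) from the middle region $M$ consisting of levels $\ell_1+1,\dots,\ell_2-1$.

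\medskip
\noindent\textbf{Separating the middle via a short fundamental cycle.} The middle region may still be large, so it must be split further. To do this I would contract all of levels $0,\dots,\ell_1$ to a single root vertex (they form a connected subtree, so this preserves planarity) and delete all of levels $\ge\ell_2$; the resulting triangulation $G'$ contains $M$ and admits a spanning tree of height at most $\ell_2-\ell_1 \le 2\sqrt n$. I then invoke the key topological lemma: a planar triangulation with a spanning tree $T$ of height $h$ has a separator consisting of a single fundamental cycle $C_e$ — a non-tree edge $e$ together with the two tree paths from its endpoints up to their lowest common ancestor — which has at most $2h+1$ vertices and can be chosen so that the interior and the exterior of this Jordan curve each contain at most $\tfrac23$ of the vertices of $M$. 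The real (non-root) vertices of $C_e$ lie in $M$, and together with $L_{\ell_1}$ and $L_{\ell_2}$ they split $M$ into two pieces $M_{\mathrm{in}}$ and $M_{\mathrm{out}}$, each of size at most $2n/3$.

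\medskip
\noindent\textbf{Assembly and the main obstacle.} Setting $C = L_{\ell_1} \cup L_{\ell_2} \cup V(C_e)$ gives $|C| \le \sqrt n + \sqrt n + (4\sqrt n+1) = O(\sqrt n)$. Removing $C$ leaves four mutually non-adjacent pieces — the outer-top region, the outer-bottom region, $M_{\mathrm{in}}$, and $M_{\mathrm{out}}$ — since every edge crossing between them passes through level $\ell_1$, level $\ell_2$, or the cycle $C_e$. Each piece has at most $2n/3$ vertices and they total at most $n$, so by an elementary packing lemma (any family of parts each of size $\le 2n/3$ and summing to $\le n$ can be split into two groups each of size $\le 2n/3$) I can distribute them into $A$ and $B$ with the required bounds; the same packing lemma absorbs the extra components in the disconnected case. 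The genuinely nontrivial step is the \emph{balanced} fundamental-cycle lemma: bounding the cycle length by $2h+1$ is immediate from the tree height, but guaranteeing the $\tfrac23$-split requires a topological argument in which one treats each $C_e$ as a simple closed curve, assigns each triangular face a weight equal to its interior vertex count, and walks across adjacent non-tree edges, showing that the interior weight changes by at most one face at a time so that a discrete intermediate-value argument yields a balanced $e$. Making this walk well-defined and verifying that it terminates at a balanced cycle is where the care is needed; the rest is layering, pigeonhole, and bookkeeping.
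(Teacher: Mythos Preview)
Your proposal is a faithful sketch of the original Lipton--Tarjan argument and is essentially correct; the layering, the pigeonhole for thin levels, the bounded-radius reduction, and the fundamental-cycle separator are exactly the standard ingredients, and you have correctly isolated the balanced-cycle lemma as the only genuinely delicate step. One small wrinkle worth patching: when you search for $\ell_1$ among levels $L_{\ell-\lfloor\sqrt n\rfloor},\dots,L_\ell$, you implicitly assume $\ell \ge \lfloor\sqrt n\rfloor$; if $\ell$ is small you simply take $\ell_1 = 0$ (the root level), and symmetrically at the top. This does not affect the $O(\sqrt n)$ bound.

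However, the paper does not prove this theorem at all: it is stated as a black-box citation to Lipton and Tarjan \cite{LT} and then \emph{applied} to establish Proposition~\ref{prop:upper}. So your proposal goes well beyond what the paper does---the paper's ``proof'' is simply the reference, whereas you have reproduced the entire separator argument from scratch. What this buys you is self-containment; what the paper's approach buys is brevity, since the separator theorem is a standard tool and the paper's focus is the lower-bound construction, not the upper bound.
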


\begin{proof}[Proof of Theorem~\ref{prop:upper}.]
  Initially, put $k \sqrt n$ cops at an arbitrary vertex, and call them
  \emph{free}\/ cops.  The constant $k$ will be determined at the end of the
  proof.  By continuously separating $G$ with these cops, we will show that
  the cops win the game.  Let $G_1 = G$, and for $i \geq 1$, construct
  $G_{i+1}$ with the following algorithm.

  Using Lemma~\ref{lem:planar_LT}, let $G_i = A_i \cup B_i \cup C_i$ such
  that $|A_i| \leq 2|G_i|/3$, $|B_i| \leq 2|G_i|/3$, $|C_i| \leq c \sqrt
  {|G_i|}$, and there is no edge in $G_i$ between $A_i$ and $B_i$.  Send
  free cops to each vertex of $C_i$ without regard to the robber's actions.
  Note that this is always possible because $G$ is strongly connected.
  This step costs at most $c \sqrt {|G_i|}$ free cops and these cops will
  not move afterward, thus becoming \emph{static}\/ cops.  If the robber was not
  caught during this process, the robber is now permanently trapped in
  either $A_i$ or $B_i$.  Set $G_{i+1}$ to be the graph induced by the
  corresponding set.  Thus, $|G_1| = n$, $|G_2| \leq 2n/3$, \ldots, $|G_i|
  \leq (\frac{2}{3})^{i - 1}n$.  The $i$-th step costs $c \sqrt {|G_i|}$
  free cops.  Thus, the number of cops needed to win is at most
  \[c \sum_{i = 0}^{\infty} \sqrt {\left(\frac{2}{3}\right)^i n} =
    c \sqrt n \sum_{i = 0}^{\infty} \left(\sqrt {\frac{2}{3}}\right) ^ i,
    \]
    which is at most $k \sqrt{n}$ for some $k$, because the sum is a
    convergent geometric series.
\end{proof}

\end{document}